\newcounter{num}[section]
\newenvironment{theorem}
{\refstepcounter{num}%
\bigskip\noindent\nopagebreak[4]{\bf Theorem~\arabic{section}.\arabic{num}. }\it}
\newenvironment{lemma}
{\refstepcounter{num}%
\bigskip\noindent\nopagebreak[4]{\bf Lemma~\arabic{section}.\arabic{num}. }\it}
\newcommand{\LL}{{\mathcal{L}}}
\newcommand{\Ss}{{\mathcal{S}}}
\newcommand{\V}{{\mathrm{V}}}
\renewcommand{\t}{{\tau}}
\newcommand{\s}{{\sigma}}
\newcommand{\M}{{\mathcal{M}}}
\begin{document}

\author{Artem N. Shevlyakov}
\title{On disjunction of equations in the semigroup language with no constants}

\maketitle

\abstract{A semigroup $S$ is an equational domain if any finite union of algebraic sets over $S$ is algebraic. We prove that every nontrivial semigroup in the standard language $\{\cdot\}$ is not an equational domain.}

\section*{Introduction}

It follows from commutative algebra that the union $Y$ of two algebrais sets  $Y_1,Y_2$ over a field $k$ is algebraic again (i.e. $Y$ is a solution of a system of algebraic equations). In the category of groups 
there also exist examples, where any finite union of algebraic sets over a group $G$ is algebraic. Following~\cite{uniTh_IV}, such groups are called {\it equational domains}. 

Notice that any group $G$ (as an algebraic structure) can be considered in three different languages:
\begin{enumerate}
\item $\LL_1=\{\cdot,^{-1},1\}$;
\item $\LL_G=\{\cdot,^{-1},1\}\cup\{g|g\in G\}$;
\item $\LL_H=\{\cdot,^{-1},1\}\cup\{h|h\in H\leq G\}$,
\end{enumerate} 
where the last two languages are the extensions of the first one by constants. An equation over a group is an expression $w(X)=1$, where $w(X)$ is a product of the letters $X\cup X^{-1}$ and language constants. Hence, any group language $\LL\in\{\LL_1,\LL_G,\LL_H\}$ defines its class of algebraic sets $AS(\LL,G)$ over a group $G$. Obviously, the next inclusions holds $AS(\LL_1,G)\subseteq AS(\LL_H,G)\subseteq AS(\LL_G,G)$.

It is possible that a group $G$ is not an equational domain in the languages $\LL_1$ or $\LL_H$, however, $G$ is an equational domain in the rich language $\LL_G$.

Indeed, if we consider groups in the language with no constants $\LL_1$, the search of equational domains among such groups is vain, since the next holds.

\medskip

\noindent{\bf Statement.}\textup{~\cite{uniTh_IV}}
{\it Any nontrivial group of the language $\LL_1$ is not an equational domain.}

\medskip

In the current paper we prove the similar result for semigroups, however its proof is more complicated than in group case (Theorem~\ref{th:main}).

\section{Definitions of semigroup theory}

Let us give the main definitions of semigroup theory, for more details one can recommend~\cite{howie}.

{\it A semigroup} is a non-empty set with an associative binary operation $\cdot$ which is called a \textit{multiplication}. 

Let $a$ be an arbitrary element of a semigroup $S$. {\it A period} $n$ of $a$ is the number of elements in the subsemigroup generated by $a$. For an element $a$ with a period $n$ one can define an {\it index} as the minimal number $m$ such that $a^m=a^n$. 

The elements $a,b\in S$ {\it commutes} if $ab=ba$. A semigroup is {\it trivial} if it contains a single element.

An element $a\in S$ is an \textit{idempotent} if $aa=a$. A semigroup $S$ is a {\it band} (or  \textit{idempotent}) if all its elements are idempotents. A band $S$ is called \textit{rectangular} if there holds the identity $xyz=xz$ for all $x,y,z\in S$. 

A semigroup is \textit{nowhere commutative} if any pair of its distinct elements does not commute. The next theorem describes nowhere commutative semigroups.

\begin{theorem}
\label{th:nowhere_commutative}
A semigroup $S$ is nowhere commutative iff $S$ is a rectangular band.
\end{theorem}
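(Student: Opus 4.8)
The plan is to prove both implications by the same device: repeatedly exhibit a pair of elements that commute, and then invoke the defining property of nowhere commutativity, which forces any two commuting elements to be equal.

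For the easy direction (rectangular band $\Rightarrow$ nowhere commutative), I would instantiate the identity $xyz=xz$ at $(x,y,z)=(a,b,a)$ to get $aba=a^2=a$, and symmetrically $bab=b$, using that a band satisfies $a^2=a$. Then, assuming $ab=ba$, I substitute to obtain $a=aba=a(ba)=a(ab)=(aa)b=ab$ and likewise $b=bab=(ba)b=(ab)b=a(bb)=ab$, so that $a=ab=b$. Hence distinct elements never commute.

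The hard direction (nowhere commutative $\Rightarrow$ rectangular band) I would carry out in three steps. First, establish that $S$ is a band: for any $a$ the elements $a$ and $a^2$ commute, since $a\cdot a^2=a^3=a^2\cdot a$, so nowhere commutativity yields $a=a^2$, i.e. every element is idempotent. Second, prove the auxiliary identity $aba=a$: using idempotency, $a\cdot(aba)=(aa)ba=aba$ and $(aba)\cdot a=ab(aa)=aba$, so $a$ and $aba$ commute, whence $a=aba$. Third, derive $xyz=xz$: the identity $aba=a$ specializes to $xzx=x$ (take $a=x$, $b=z$) and $zxz=z$ (take $a=z$, $b=x$), so that $(xz)(xyz)=(xzx)yz=xyz$ and $(xyz)(xz)=xy(zxz)=xyz$; thus $xz$ and $xyz$ commute, and nowhere commutativity forces $xz=xyz$, which is exactly the rectangular band identity.

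The main obstacle lies entirely in the forward direction, and it is one of discovery rather than technical difficulty: one must guess the correct commuting pairs, namely $(a,a^2)$ to obtain idempotency, $(a,aba)$ to obtain the identity $aba=a$, and finally $(xz,\,xyz)$ to collapse the middle factor. Once $aba=a$ is in hand the final computation is immediate, so the crux is recognizing that $aba=a$ is the right intermediate lemma to isolate and prove before attacking $xyz=xz$ directly.
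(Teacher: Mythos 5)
Your proof is correct in every step: both directions check out, and the chain of commuting pairs $(a,a^2)$, $(a,aba)$, $(xz,xyz)$ does exactly the work needed. Note, however, that the paper itself gives no proof of this theorem at all --- it is quoted as a known characterization of nowhere commutative semigroups (see Howie's \emph{Fundamentals of Semigroup Theory}), so there is no argument in the paper to compare against; your proposal supplies a complete, self-contained proof of what the paper treats as a citation. The one point worth making explicit in a write-up is that the rectangular band conclusion requires both idempotency and the identity $xyz=xz$ (the identity alone only gives $x^3=x^2$), which your Steps 1 and 3 do establish separately.
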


\section{Algebraic geometry over semigroups}

We shall consider semigroups in the language $\LL=\{\cdot\}$. 

Denote by $X$ the finite set of variables $x_1,x_2,\ldots,x_n$. A \textit{term} of the language $\LL$ in variables $X$ is a finite product of variables from the set $X$.

An {\it equation} over the language $\LL$ is an equality of two terms $\t(X)=\s(X)$. For example, the expressions  $x_1x_2^2=x_3x_1$, $x_1^2x_2x_3=x_1x_3$ are equations over $\LL$. A {\it system of equations} (a {\it system} for shortness) is an arbitrary set of equations.
 
The \textit{solution set} of a system $\Ss$ in the semigroup $S$ is naturally defined and denoted by $\V_S(\Ss)$. A set $Y\subseteq S^n$ is {\it algebraic} over a semigroup $S$ if there exists a system $\Ss$ in variables $x_1,x_2,\ldots,x_n$ with the solution set $Y$. 

Following~\cite{uniTh_IV}, let us give the main definition of the paper. A semigroup $S$ is called an  {\it equational domain (e.d.)} if any finite union $Y=Y_1\cup Y_2\cup\ldots\cup Y_n$ of algebraic sets $Y_i$ is algebraic.

\section{Main result}

Our aim is to prove Theorem~\ref{th:main} following the next plan. We consequently prove the absence of equational domains in the next classes of nontrivial semigroups:
\begin{enumerate}
\item idempotent semigroups (Lemma~\ref{l:idempotent});
\item semigroups with the identity $x^2=x^3$ (Lemma~\ref{l:for_x^2=x^3});
\item semigroups with an element $a$ such that $a^2\neq a^3$ (Lemma~\ref{l:first}).
\end{enumerate}

\begin{lemma}
\label{l:idempotent}
Any nontrivial idempotent semigroup $S$ is not an e.d.
\end{lemma}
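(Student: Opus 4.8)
The plan is to show directly that $S$ fails the defining property of an equational domain by exhibiting two algebraic sets whose union is not algebraic. I work in $S^4$ with variables $x_1,x_2,x_3,x_4$ and set $Y_1=\V_S(x_1=x_2)$ and $Y_2=\V_S(x_3=x_4)$; each is algebraic, being the solution set of a single equation. Their union is the ``double diagonal'' $D=\{(a_1,a_2,a_3,a_4)\mid a_1=a_2 \text{ or } a_3=a_4\}$, and since $S$ is nontrivial $D\neq S^4$. Everything reduces to proving that $D$ is not algebraic. So I suppose for contradiction that $D=\V_S(\Ss)$ for some system $\Ss$ and derive an absurdity. For every equation $\t=\s$ of $\Ss$, equality holds on the two slices $\{x_2=x_1\}$ and $\{x_4=x_3\}$, since these slices lie in $D$; the task is to turn this into a contradiction, and the behaviour of words splits sharply according to whether $S$ is commutative.

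If $S$ is commutative it is a semilattice, so the value of any word equals the meet of the elements substituted for the variables occurring in it; since $S$ has at least two elements, an equation $\t=\s$ holds identically on $S$ iff the contents $c(\t),c(\s)$ (the sets of variables occurring) coincide. Evaluating on the slice $\{x_2=x_1\}$ forces $c(\t)$ and $c(\s)$ to agree after identifying $x_1$ with $x_2$, which already yields $x_3\in c(\t)\Leftrightarrow x_3\in c(\s)$ and $x_4\in c(\t)\Leftrightarrow x_4\in c(\s)$; symmetrically the slice $\{x_4=x_3\}$ settles $x_1$ and $x_2$ individually. Combining the two slices gives $c(\t)=c(\s)$ outright, so each equation of $\Ss$ holds on all of $S^4$. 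Hence $\V_S(\Ss)=S^4\neq D$, a contradiction.

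If $S$ is not commutative, pick $a,b$ with $ab\neq ba$. A short computation in the finite subsemigroup $\langle a,b\rangle$, using only $x=x^2$, shows that $\{ab,aba\}$ is a right-zero subsemigroup when $ab\neq aba$ and that $\{ba,aba\}$ is a left-zero subsemigroup when $ba\neq aba$; since $ab=aba=ba$ would force $ab=ba$, at least one of these cases occurs, so $S$ contains a two-element left- or right-zero subsemigroup $B$. Now I localize: as $B$ is a subsemigroup, evaluating the words of $\Ss$ inside $B$ gives $\V_B(\Ss)=D\cap B^4$. Over a left-zero $B$ the value of any word is the element substituted for its first variable, so each equation $\t=\s$ reduces over $B$ either to a trivial identity or to a single coordinate equality $x_i=x_j$; thus $\V_B(\Ss)$ consists of the tuples constant on the blocks of a partition of $\{1,2,3,4\}$, and its cardinality is a power of $2$. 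The right-zero case is identical using last variables. But $|D\cap B^4|=8+8-4=12$, which is not a power of $2$, the desired contradiction.

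The main obstacle is precisely the non-commutative case: unlike the semilattice situation, agreement of $\t$ and $\s$ on the two slices does not force agreement off $D$, so one cannot argue globally on $S$. The resolution is to descend to a two-element left- or right-zero subsemigroup, where words collapse to a single head or tail variable and the admissible solution sets are rigidly constrained to have cardinality a power of $2$; the parity mismatch $12\neq 2^{k}$ then finishes the proof. The two points requiring care are the elementary production of the subsemigroup $B$ (the distinctness claims above) and the verification that over a left- or right-zero band every word equation is equivalent to a coordinate equality.
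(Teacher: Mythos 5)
Your proof is correct, but it takes a genuinely different route from the paper's. The paper works in $S^3$ with the set $\V_S(x_1=x_2)\cup\V_S(x_1=x_3)$ and splits into the cases ``nowhere commutative'' (invoking the theorem that such semigroups are rectangular bands) versus ``some two distinct elements commute''; in both cases it builds a two-element subsemigroup $\{a,c\}$ with $ac=c$, $ca=a$ (resp. $ac=ca=c$) and derives a contradiction by analyzing the last variable (resp. the variable content) of the two sides of a single equation violated at the point $(a,c,c)$. You instead work in $S^4$ with $\V_S(x_1=x_2)\cup\V_S(x_3=x_4)$ (the same set the paper uses later in its Lemma~3.3) and split along commutative versus non-commutative. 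Your commutative case is a global argument: in a nontrivial semilattice an equation is an identity iff the two sides have the same content, and the two slices force equal content, so $\V_S(\Ss)=S^4\neq D$ --- this is slicker than the paper's pointwise analysis. Your non-commutative case is where the approaches differ most: rather than citing the rectangular-band characterization, you construct a two-element left- or right-zero subsemigroup $B$ by hand (the band computations $ab\cdot aba=aba$, $aba\cdot ab=ab$, etc., all check out, and $ab=aba=ba$ would contradict $ab\neq ba$), and then finish by a counting argument: over left-zero or right-zero $B$ every word collapses to its first (or last) variable, so any algebraic subset of $B^4$ has cardinality $2^k$, whereas $|D\cap B^4|=12$. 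This cardinality obstruction is robust --- it needs no choice of a witnessing point or equation --- and it sidesteps a subtlety the paper glosses over (the paper's $c=ab$ could a priori equal $a$, e.g.\ in a left-zero band, making $(a,c,c)\in\M$). The trade-off is that you use four variables instead of three and your two cases require entirely different machinery, while the paper's argument is uniform across its two cases. Two small presentational debts in your write-up: the semilattice fact (identity iff equal content, for nontrivial $S$) and the explicit band computations producing $B$ should each be written out in a line or two, but both are elementary and correct.
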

\begin{proof}
Denote $X=\{x_1,x_2,x_3\}$ and assume the set
\[
\M=\V_S(x_1=x_2)\cup\V_S(x_1=x_3)=\{(x_1,x_2,x_3)|x_1=x_2\mbox{ or } x_1=x_3\}
\]
is algebraic, i.e. it coincides with the solution set of a system $\Ss(X)$. 

There are exactly two cases.

\begin{enumerate}
\item A semigroup $S$ is nowhere commutative. By Theorem~\ref{th:nowhere_commutative} $S$ is an rectangular band. Let $a,b\in S$ be a pair of two distinct elements and $c=ab$. By the identities $xyz=xz$ $xx=x$, we have $ac=c$, $ca=a$.

As $(a,c,c)\notin\M$, there exists an equation $\pi(X)=\rho(X)\notin\Ss(X)$ which does not satisfy this point. Since the set $\{a,c\}$ is a subsemigroup in $S$, the values of the terms $\pi(X)$, $\rho(X)$ at the point $(a,c,c)$ belong to $\{a,c\}$. Without loss of generality one can state $\pi(a,c,c)=a$, $\rho(a,c,c)=c$. Hence, $\pi(X)$ ends by the variable $x_1$, whereas the term $\rho(X)$ ends either by $x_2$ or $x_3$.

Thus, $\pi(a,a,c)=\pi(a,c,a)=a$, and either $\rho(a,c,a)=c$ (if $\rho(X)$ ends by $x_2$) or $\rho(a,a,c)=c$ (if $\rho(X)$ ends by $x_3$). Anyway, we came to the contradiction, since $(a,a,c),(a,c,a)\in\M$.

\item  There exist elements $a,b\in S$ with $ab=ba$ and $c=ab$. It is easy to prove that $ac=ca=c$. As $(a,c,c)\notin\M$, there exists an equation $\pi(X)=\rho(X)\notin\Ss(X)$ which does not satisfy this point. Since the set $\{a,c\}$ is a subsemigroup in $S$, the values of the terms $\pi(X)$, $\rho(X)$ at the point $(a,c,c)$ belong to $\{a,c\}$. Without loss of generality one can state $\pi(a,c,c)=a$, $\rho(a,c,c)=c$. Hence, $\pi(X)$ contains only $x_1$, whereas $\rho(X)$ contains $x_2$ or $x_3$.

Thus, $\pi(a,a,c)=\pi(a,c,a)=a$, and either $\rho(a,c,a)=c$ (if $x_2$ occurs in $\rho(X)$) or $\rho(a,a,c)=c$ (if $x_2$ occurs in $\rho(X)$). Anyway, we came to the contradiction, since $(a,a,c),(a,c,a)\in\M$.

\end{enumerate}
\end{proof}

\begin{lemma}
\label{l:for_x^2=x^3}
Suppose the identity $x^2=x^3$ holds in a semigroup $S$ and $S$ is not idempotent. Hence, $S$ is not an e.d.
\end{lemma}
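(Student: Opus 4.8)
The plan is to mimic the structure of the proof of Lemma~\ref{l:idempotent}: I would again try to show that the union $\M=\V_S(x_1=x_2)\cup\V_S(x_1=x_3)$ cannot be algebraic. Since $S$ is not idempotent but satisfies $x^2=x^3$, there must exist an element $a\in S$ with $a^2\neq a$; for such an element the subsemigroup generated by $a$ is $\{a,a^2\}$ with $a^2$ an idempotent (because $a^2=a^3=(a^2)^2$). The natural choice of witness is to set $b=a^2$, so that $\{a,b\}$ is a two-element semigroup in which $b$ is a zero-like absorbing idempotent: $ab=ba=bb=b$ and $aa=b$. The key point is that evaluating a term at a point whose coordinates lie in $\{a,b\}$ depends only on which variables occur and whether the variable $x_1$ (the one that should produce the distinguished value) appears, together with the multiplicities that push powers up to $a^2=b$.

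First I would assume $\M$ is the solution set of a system $\Ss(X)$ in $X=\{x_1,x_2,x_3\}$. Because $(a,b,b)\notin\M$ (here $a\neq b$ since $S$ is not idempotent at $a$, and the two coordinates $x_2,x_3$ both equal $b\neq a$), there is an equation $\pi(X)=\rho(X)$ in $\Ss$ failing at $(a,b,b)$. Evaluating at a point in $\{a,b\}^3$, each term takes a value in $\{a,b\}$, and a term evaluates to $a$ precisely when it is a single occurrence of one variable whose value is $a$ and no other variable occurs (otherwise a product of length $\geq 2$, or any appearance of a $b$-valued variable, forces the absorbing value $b$). Up to symmetry I may assume $\pi(a,b,b)=a$ and $\rho(a,b,b)=b$. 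The condition $\pi(a,b,b)=a$ forces $\pi(X)$ to be exactly the single letter $x_1$, while $\rho(X)=b$ means $\rho$ is not the single letter $x_1$.

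Next I would derive the contradiction by testing the points $(a,a,b)$ and $(a,b,a)$, both of which lie in $\M$ and hence must satisfy $\pi=\rho$. On either point $\pi=x_1$ evaluates to $a$. For $\rho$ to evaluate to $a$ at one of these points, $\rho$ would have to be the single letter $x_2$ (giving $a$ at $(a,b,a)$ only if one is careful) or the single letter $x_3$; but then $\rho$ would evaluate to $b$ at the other point, breaking the equation there. More carefully: if $\rho$ contains any variable other than a single $x_1$, then at whichever of $(a,a,b)$, $(a,b,a)$ makes the relevant extra variable equal to $b$, the term $\rho$ takes the absorbing value $b\neq a=\pi$, contradicting membership in $\M$. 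So in all cases one of the two $\M$-points violates $\pi=\rho$.

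The main obstacle I anticipate is the bookkeeping of exactly when a term over $\{a,b\}$ outputs $a$ versus $b$: unlike the idempotent case, here a variable may occur with multiplicity, and a single variable occurring twice (e.g. $x_1x_1$) evaluates to $a^2=b$ rather than $a$. I would therefore state cleanly the evaluation rule ``a term on $\{a,b\}^n$ with absorbing idempotent $b$ equals $a$ iff the term is a single occurrence of one variable and that variable is assigned $a$, and equals $b$ otherwise,'' and verify it before running the case analysis. This lemma (evaluation collapses to ``is it literally one letter?'') is what makes $\rho\neq x_1$ force a $b$-value at a suitable $\M$-point, closing the argument.
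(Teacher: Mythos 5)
Your proposal is correct and follows essentially the same route as the paper: the same union $\M=\V_S(x_1=x_2)\cup\V_S(x_1=x_3)$, the same witness point $(a,a^2,a^2)\notin\M$ with the subsemigroup $\{a,a^2\}$, and the same contradiction at the test points $(a,a,a^2)$ and $(a,a^2,a)$. The only difference is presentational: you isolate as an explicit evaluation rule (a term on $\{a,b\}$ with $b=a^2$ absorbing equals $a$ iff it is literally the single letter of an $a$-valued variable) what the paper handles implicitly in its case split on whether $\rho$ contains $x_2$, $x_3$, or a repeated $x_1$.
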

\begin{proof}
Denote $X=\{x_1,x_2,x_3\}$ and assume the set
\[
\M=\V_S(x_1=x_2)\cup\V_S(x_1=x_3)=\{(x_1,x_2,x_3)|x_1=x_2\mbox{ or } x_1=x_3\}
\]
is algebraic, i.e. it coincides with the solution set of a system $\Ss(X)$. 

As the semigroup $S$ is not idempotent, there exists an element $a\in S$ with $a\neq a^2$. 

As $(a,a^2,a^2)\notin\M$, there exists an equation $\pi(X)=\rho(X)\notin\Ss(X)$ which does not satisfy this point. Since the set $\{a,a^2\}$ is a subsemigroup in $S$, the values of the terms $\pi(X)$, $\rho(X)$ at the point $(a,a^2,a^2)$ belong to $\{a,a^2\}$. Without loss of generality one can state  $\pi(a,a^2,a^2)=a$, $\rho(a,a^2,a^2)=a^2$. Hence, $\pi(X)=x_1$, whereas the term $\rho(X)$ contains either the variables from the set $\{x_2,x_3\}$ or $x_1$ occurs in $\rho(X)$ at least two times.

Thus, $\pi(a,a,a^2)=\pi(a,a^2,a)=a$, and either $\rho(a,a^2,a)=a^2$ or $\rho(a,a,a^2)=a^2$. Anyway, we came to the contradiction, since $(a,a,a^2),(a,a^2,a)\in\M$.
\end{proof}

\begin{lemma}
\label{l:first}
If a semigroup $S$ does not satisfy the identity $x^2=x^3$ it is not an e.d.
\end{lemma}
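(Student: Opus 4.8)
The plan is to reuse the scheme of the two previous lemmas for the same set $\M=\V_S(x_1=x_2)\cup\V_S(x_1=x_3)$, but to run the contradiction inside the monogenic subsemigroup $T=\lb a\rb$ generated by an element $a$ with $a^2\neq a^3$ (such $a$ exists since $S$ fails $x^2=x^3$). First note $a\neq a^2$: otherwise $a$ is idempotent and $a^2=a=a^3$, a contradiction. Assume $\M=\V_S(\Ss)$ for a system $\Ss(X)$, $X=\{x_1,x_2,x_3\}$. I will produce one point $q\notin\M$ with coordinates in $T$ together with three auxiliary points of $\M$, and show that every equation holding at the auxiliary points is forced to hold at $q$. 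Since every equation of $\Ss$ holds on all of $\M$ (as $\V_S(\Ss)=\M$), this will force $q\in\V_S(\Ss)=\M$, contradicting $q\notin\M$.

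Concretely, for a parameter $s\geq1$ I will use the witness $q=(a^{s},a^{2s},a^{2s})$ together with $q_0=(a^{s},a^{s},a^{s})$, $q_{12}=(a^{s},a^{s},a^{2s})$ and $q_{13}=(a^{s},a^{2s},a^{s})$, all of which lie in $\M$ (each has $x_1=x_2$ or $x_1=x_3$). The key observation is the linearization on $T$: a term $\pi(X)$ in which $x_i$ occurs $c_i$ times evaluates at $(a^{e_1},a^{e_2},a^{e_3})$ to $a^{c_1e_1+c_2e_2+c_3e_3}$. Thus an equation $\pi=\rho$, with occurrence vectors $c$ and $d$, restricted to the four points above, becomes a comparison in $\lb a\rb$ of the forms $c_1+c_2+c_3$, $c_1+c_2+2c_3$, $c_1+2c_2+c_3$, $c_1+2c_2+2c_3$ against their $d$-analogues. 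Since the $q$-form $c_1+2c_2+2c_3$ equals $q_{12}$-form plus $c_2$ and equals $q_{13}$-form plus $c_3$, equality at $q_0,q_{12},q_{13}$ ought to propagate to $q$.

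Making this propagation precise requires splitting $T$ according to the structure theorem for monogenic semigroups: either $T$ is infinite, or it is finite with an index $k\geq1$ and period $p\geq1$ such that $a^{u}=a^{v}$ iff $u=v$, or $u,v\geq k$ and $u\equiv v\pmod p$. If $T$ is infinite I take $s=1$; powers of $a$ are distinct, so the three neighbour equations are the genuine equalities $c_1+c_2+c_3=d_1+d_2+d_3$, $c_1+c_2+2c_3=d_1+d_2+2d_3$, $c_1+2c_2+c_3=d_1+2d_2+d_3$, whence $c=d$ and $\pi=\rho$ holds at $q$; moreover $q=(a,a^2,a^2)\notin\M$ because $a\neq a^2$. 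The same exact computation works, again with $s=1$, when $T$ is finite of period $p=1$ (here $a^2\neq a^3$ forces index $k\geq3$ and a zero $z=a^{k}$): if both sides already equal $z$ at $q$ they agree there, and otherwise the side that does not, say $\pi$, has $q$-form $<k$, so all three neighbour forms (being $\leq$ the $q$-form) also stay below $k$, each capped neighbour equality becomes a genuine equality of exponents forcing the corresponding $d$-form to equal the $c$-form, and again $c=d$ gives $\pi=\rho$ at $q$.

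The genuinely harder case, which I expect to be the main obstacle, is a finite $T$ of period $p\geq2$: there $a^{u}=a^{v}$ holds only up to congruence $\pmod p$ above the index, so the exact linear deduction collapses. To handle it I would choose $s\geq k$ with $\gcd(s,p)=1$; then all four points have exponents $\geq k$, every equality becomes a congruence $\pmod p$, and after cancelling $s$ the three neighbour congruences give $c_i\equiv d_i\pmod p$ for all $i$, so that $a^{(c_1+2c_2+2c_3)s}=a^{(d_1+2d_2+2d_3)s}$, i.e. $\pi=\rho$ at $q$. One only checks that the witness lies off $\M$: since $s\not\equiv0\pmod p$ we have $a^{s}\neq a^{2s}$, hence $q\notin\M$. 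Combining the three cases, the equation of $\Ss$ that must fail at $q$ is instead forced to hold there, contradicting $q\in\V_S(\Ss)=\M$ and completing the proof.
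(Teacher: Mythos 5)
Your proof is correct, but it takes a genuinely different route from the paper's. The paper does not keep the three-variable set at all: it moves to four variables, takes $\M=\V_S(x_1=x_2)\cup\V_S(x_3=x_4)$, evaluates an arbitrary equation $\t(X)=\s(X)$ of $\Ss$ at the two points $P_1=(a^2,a,a,a)$ and $P_2=(a,a,a^2,a)$ of $\M$, and then simply multiplies the two resulting equalities in $S$ --- using only that powers of $a$ commute, so $a^{u}=a^{v}$ and $a^{u'}=a^{v'}$ imply $a^{u+u'}=a^{v+v'}$ --- to conclude that the equation also holds at the coordinatewise product $Q=P_1\cdot P_2=(a^3,a^2,a^3,a^2)$, which lies off $\M$ precisely because $a^2\neq a^3$. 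That single multiplication trick replaces your entire case analysis: no structure theorem for monogenic semigroups, no index/period, no congruences, and the argument is uniform in $S$. What your approach buys in exchange is uniformity of the witness: you keep the same set $\M=\V_S(x_1=x_2)\cup\V_S(x_1=x_3)$ used in the idempotent and $x^2=x^3$ lemmas, so one union of two algebraic sets serves the whole theorem; the price is invoking the classification of monogenic semigroups and splitting into infinite, finite of period one, and finite of period $p\geq 2$, where the coprime choice of $s\geq k$ (which exists, e.g.\ $s=kp+1$) and cancellation of $s$ modulo $p$ carry the argument. All three of your cases check out --- the linear systems in the occurrence vectors do force $c_i=d_i$ (or $c_i\equiv d_i\bmod p$), and your witnesses do lie off $\M$ --- so both proofs are complete; yours is longer and leans on more machinery, while the paper's is shorter but exploits the freedom to change which union of algebraic sets is being tested.
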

\begin{proof}
Denote $X=\{x_1,x_2,x_3,x_4\}$. Assume that the solution set of a system $\Ss(X)$ equals 
\[
\M=\V_S(x_1=x_2)\cup\V_S(x_3=x_4)=\{(x_1,x_2,x_3,x_4)|x_1=x_2\mbox{ or }x_3=x_4\},
\]
and $\t(X)=\s(X)$ is an equation of $\Ss$. By the condition of the lemma, there exists an element $a\in S$ with $a^2\neq a^3$ (and, moreover, $a\neq a^2$). 

By the choice of the system $\Ss$, the points
\begin{equation*}
P_1=(a^2,a,a,a),\; P_2=(a,a,a^2,a)
\end{equation*}  
satisfy the equation $\t(X)=\s(X)$.

Suppose the term $\t(X)$ contains $n_i$ occurrences of the variable $x_i$. Similarly, $m_i$ is the number of occurrences of the variable $x_i$ in $\s(x)$. The equalities $\t(P_i)=\s(P_i)$ imply 
\begin{equation*}
\begin{cases}
a^{2n_1+n_2+n_3+n_4}=a^{2m_1+m_2+m_3+m_4},\\
a^{n_1+n_2+2n_3+n_4}=a^{m_1+m_2+2m_3+m_4},\\
\end{cases}
\end{equation*}

Let us multiply the equations of the system above and obtain the equality
\begin{equation}
a^{3n_1+2n_2+3n_3+2n_4}=a^{3m_1+2m_2+3m_3+2m_4}.
\label{eq:power_from13}
\end{equation}

Consider a point $Q=(a^3,a^2,a^3,a^2)\notin \M$. Compute
\begin{eqnarray*}
\t(Q)=a^{3n_1+2n_2+3n_3+2n_4},\\
\s(Q)=a^{3m_1+2m_2+3m_3+2m_4}.
\end{eqnarray*} 

By~(\ref{eq:power_from13}), we have $\t(Q)=\s(Q)$. As $\t(X)=\s(X)$ was chosen as an arbitrary equation of $\Ss$, $Q\in\V_S(\Ss)$ that contradicts with the choice of the system $\Ss$.
\end{proof}

The main result immediately follows from Lemmas~\ref{l:idempotent},~\ref{l:for_x^2=x^3},~\ref{l:first}.

\begin{theorem}
\label{th:main}
Any nontrivial semigroup in the language $\LL=\{\cdot\}$ is not an equational domain.
\end{theorem}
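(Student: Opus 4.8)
The plan is to deduce Theorem~\ref{th:main} from the three preceding lemmas by an exhaustive case distinction governed by a single semigroup identity. Given a nontrivial semigroup $S$ in the language $\LL$, I would split into cases according to whether the identity $x^2=x^3$ holds in $S$, and, in the case it does, whether $S$ is idempotent.

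First I would dispose of the case in which $S$ does not satisfy $x^2=x^3$: here Lemma~\ref{l:first} applies directly and $S$ fails to be an e.d. In the complementary case the identity $x^2=x^3$ holds throughout $S$, and I would subdivide once more. If $S$ is idempotent, Lemma~\ref{l:idempotent} yields the conclusion; if $S$ is not idempotent, Lemma~\ref{l:for_x^2=x^3} does. To confirm that the three cases are exhaustive it suffices to note that every idempotent semigroup automatically satisfies $x^2=x^3$, since $x=x^2$ forces $x^3=x^2\cdot x=x\cdot x=x^2$; thus the dichotomy ``idempotent versus not idempotent'' merely refines the branch on which $x^2=x^3$ holds, and no semigroup escapes the classification.

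The common thread that makes this the right trichotomy is that each lemma refutes the equational-domain property by the same device: it exhibits a finite union of ``diagonal'' algebraic sets --- each of the form $\V_S(x_i=x_j)$, hence algebraic as the solution set of a single equation --- such as $\V_S(x_1=x_2)\cup\V_S(x_1=x_3)$ or $\V_S(x_1=x_2)\cup\V_S(x_3=x_4)$, and shows that this union is not itself algebraic. Assuming the union equals $\V_S(\Ss)$ for a system $\Ss$, one produces a point lying outside the union that nevertheless satisfies every equation of $\Ss$, a contradiction. For the theorem as stated this assembly is immediate; the genuine obstacle sits inside the third case (Lemma~\ref{l:first}), where $S$ has an element $a$ with $a^2\neq a^3$ and no idempotent or finite-order structure is available to exploit. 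There the offending point $Q=(a^3,a^2,a^3,a^2)\notin\M$ cannot be read off directly and must instead be forced by a power-counting argument that multiplies the two specializations of an arbitrary equation $\t(X)=\s(X)$ at the points $P_1$ and $P_2$, thereby deducing $\t(Q)=\s(Q)$ for every equation of the system at once.
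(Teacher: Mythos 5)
Your proposal is correct and matches the paper's own argument: the paper likewise derives Theorem~\ref{th:main} immediately from Lemmas~\ref{l:idempotent}, \ref{l:for_x^2=x^3}, and \ref{l:first} via exactly this trichotomy (no $x^2=x^3$; $x^2=x^3$ but not idempotent; idempotent). Your explicit check that idempotency implies $x^2=x^3$, making the cases exhaustive, is a small but welcome addition that the paper leaves tacit.
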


The information of the author:

Artem N. Shevlyakov

Omsk Branch of Institute of Mathematics, Siberian Branch of the Russian Academy of Sciences

644099 Russia, Omsk, Pevtsova st. 13

Phone: +7-3812-23-25-51.

e-mail: \texttt{a\_shevl@mail.ru}

\end{document}